\newtheorem{theorem}{Theorem}%[section]
\newtheorem{proposition}{Proposition}%[section]
\newtheorem{question}{Question}%[section]
\newtheorem{remark}{Remark}%[section]
\numberwithin{equation}{section} \numberwithin{theorem}{section}
\numberwithin{proposition}{section} \numberwithin{lemma}{section}
\numberwithin{corollary}{section}
\numberwithin{definition}{section} \numberwithin{remark}{section}
\newcommand{\R}{\mathbb{R}}
\author{Antoine Lemenant and Hayk Mikayelyan}
\title{Stationarity of the crack-front for the Mumford-Shah problem in 3D}
\begin{document}
\begin{abstract}  In this paper we exhibit a family of stationary solutions of the Mumford-Shah functional in $\R^3$, arbitrary close to a crack-front. Unlike other  examples, known in the literature, those are topologically non-minimizing in the sense of Bonnet \cite{b}. 

We also give a local version in a finite cylinder and prove an energy estimate for minimizers. Numerical illustrations   indicate the stationary solutions are unlikely minimizers and show how the dependence on axial variable impacts the geometry of the discontinuity set.  

A self-contained proof of the stationarity of the crack-tip function for the Mumford-Shah problem in 2D is presented. 
\end{abstract}

\maketitle

\tableofcontents

\thispagestyle{empty}

\section{Introduction and main statements} 

The Mumford-Shah functional 
$$J(u,K):=\int_{\Omega \backslash K}|\nabla u|^2 \; + \alpha (u-g)^2    \; d  x +\beta \mathcal{H}^{N-1}(K),$$
where $g\in L^\infty(\Omega)$ and   $u \in H^1(\Omega \setminus K)$, was introduced in the context of image processing  by Mumford and  Shah in \cite{ms}. The idea is to find, by minimizing the functional $J(u,K)$ among couples $(u,K)$, a ``piecewise smooth'' approximation of the given   image data $g$, together with the location of its edges, represented by the set $K$. Actually, the concept of competing bulk and surface energies is much older and goes back to   Griffith (see \cite{griff}), whose theory of brittle fracture is based on the balance between gain in surface energy and strain energy release. This idea translated to  the Mumford-Shah functional was exploited in the now classical variational model for crack propagation  by Francfort and Marigo \cite{fracf}. We refer to \cite{afp,fusco,d,focardi,lemRev} for overview references about the Mumford-Shah functional.

In this article we will take $\alpha=0$, $\beta=1$, and will consider local minimizers defined as follows.  
Let $\Omega \subset \R^N$ be open and let  $(u,K)$ be a couple such that  $K\subset \Omega$ closed and $u\in H^1(B \setminus K)$, for all balls  $B\subset \Omega$. We consider the local Mumford-Shah  energy in the ball $B$ given by
$$J(u,K,B):=\int_{B}|\nabla u|^2 \; dx +\mathcal{H}^{N-1}(K\cap B).$$
We are interested in couples $(u,K)$  that are locally minimizing in $\Omega$, i.e. such that 
$$J(u,K,B) \leq J(u',K',B)$$
for any $B\subset \Omega$ and for any competitor $(u',K')$ which satisfies $u=u'$ in $\Omega \setminus B$ and $K=K'$ in $\Omega \setminus B$. In that case we will simply say that $(u,K)$ is a Mumford-Shah minimizer.

As minimizers are known to be equivalently defined on the $SBV$ space (see \cite{afp}),  it is not restrictive to assume  $K$ being an  $(N-1)$-rectifiable set.

It is then classical that any such minimizer will satisfy the Euler-Lagrange equation associated to the Mumford-Shah functional, which is given for any $\eta\in C^1_c(\Omega)^N$ by the equation (see \cite[Theorem 7.35]{afp})
\begin{eqnarray}
\int_{\Omega}|\nabla u|^2 {\rm div} \,\eta-2\langle \nabla u, \nabla u \cdot \nabla \eta\rangle \;dx + \int_{K}{\rm div}^{K}\, \eta \;d\mathcal{H}^{N-1} =0. \label{equation}
\end{eqnarray}
A couple $(u,K)$ satisfying the equation \eqref{equation} for all $\eta \in C^1_c(\Omega)^N$ will be called \emph{stationary} in $\Omega$.

One of the most famous example of Mumford-Shah minimizer in $\R^2$ is   the so-called cracktip function, which is known to be the only non-constant element in the list of global minimizers of \cite{b}. This function plays also a fundamental role in fracture theory. Namely, we define 
$$K_0:=]-\infty,0]\times\{0\}\subset \R^2$$
 and 
$$\varphi_0(r,\theta):=\sqrt{\frac{2 r}{\pi}}\sin(\theta/2) \quad r>0, \quad \theta\in ]-\pi,\pi[. $$
It has been proved in a famous 250-pages paper by Bonnet and David \cite{Bonnetd} that the couple $(\varphi_0,K_0)$ is a Mumford-Shah minimizer in  $\R^2$. 

Let us notice that the crack-tip is the only singularity where the two competing terms of the Mumford-Shah energy, the Dirichlet energy and the surface area of the discontinuity set, scale of the same order. In almost all other singular points the surface area dominates and makes the successful application of the methods originating from the minimal surface theory possible.

In contrast, the cracktip singularity is  much more delicate and offers  many challenging  mathematical questions that are  also of great interest regarding to brittle fracture theory and crack propagation.

While dimension 2 is already well studied, the case of dimension 3 is even more open.  Indeed, it is possible to construct a 3D version of the cracktip function in $\R^3$  taking the vertically constant function defined by
$$u_0(r,\theta,z):=\varphi_0(r,\theta),$$
which admits as singular set the half-plane
$$P_0:=K_0\times \R.$$
By this way one obtains a couple $(u_0,P_0)$, the {\it crack-front}, which is a Mumford-Shah minimizer in $\R^3$ (see \cite{d}). Actually, one can do the same construction in any dimensions but for simplicity in this paper we restrict ourselves to dimensions 2 and 3 only.

A fortciori, as a byproduct of \cite{Bonnetd}, we know that  the cracktip function and its 3D analogue  both satisfy the Euler-Lagrange equation \eqref{equation}.  We shall give here a direct proof of the latter, without using \cite{Bonnetd}.

Our main interest in this paper is to get some understanding about the dependence of the function $u$ on the points on the front of the crack in 3D (see \cite{and-myk} for a recent preprint about regularity issues at the cracktip in 2D). We observe the following curious fact: in dimension 3, there exists a family of stationary couples $(u_\delta,P_0)$, which are not minimizing, and which are arbitrary close to the crack-front function.  More precisely we define
$$u_\delta(r,\theta,z):=\varphi_0(r,\theta) + \delta z,$$
for some parameter $\delta  \in \R$.  The associated singular set is still the half-plane $P_0$. We then prove the following.

\begin{theorem}  \label{mainth} The couple $(u_\delta,P_0)$ is stationary in $\R^3$ for any $\delta \in \R$, but is minimizing  in $\R^3$ if and only if $\delta=0$.
\end{theorem}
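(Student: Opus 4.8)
We need to show two things: (1) $(u_\delta, P_0)$ is stationary in $\mathbb{R}^3$ for every $\delta$, and (2) it is a minimizer iff $\delta = 0$.

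For (1), stationarity. The Euler-Lagrange equation \eqref{equation} is linear in the "bulk" and "surface" pieces separately. Write $u_\delta = u_0 + \delta z$. The key observations: $\nabla u_\delta = \nabla u_0 + \delta e_z$, and $\nabla u_0 \cdot e_z = 0$ since $u_0$ is vertically constant, so $|\nabla u_\delta|^2 = |\nabla u_0|^2 + \delta^2$. The added constant $\delta^2$ contributes $\delta^2 \int_\Omega \mathrm{div}\,\eta\,dx = 0$ for compactly supported $\eta$. The cross term $\langle \nabla u_\delta, \nabla u_\delta \cdot \nabla\eta\rangle$ expands into the $u_0$-only term plus terms linear and quadratic in $\delta$; one checks the $\delta$-linear terms integrate to zero (they involve $\partial_z$ derivatives of $\eta$ against $\nabla u_0$, which is $z$-independent, so integrating in $z$ over the support kills them — more carefully, one integrates by parts in $z$), and the $\delta^2$ term again only sees the constant $|e_z|^2$. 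So the bulk part of \eqref{equation} for $u_\delta$ equals the bulk part for $u_0$. The surface term $\int_{P_0}\mathrm{div}^{P_0}\eta$ is unchanged. Hence stationarity of $(u_\delta,P_0)$ reduces to stationarity of $(u_0,P_0)$, which in turn follows from stationarity of the 2D cracktip $(\varphi_0,K_0)$ — and the paper promises a self-contained proof of that. So this direction is essentially bookkeeping once the 2D stationarity is in hand.

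For (2), the "only if" direction (the substantive part). When $\delta \neq 0$ we must produce a competitor in some finite cylinder $B$ with strictly smaller energy. The natural idea: the crack-front $P_0$ "costs" surface energy $\mathcal{H}^2(P_0 \cap B)$ to sustain the jump of $u_0$, but the $\delta z$ part is smooth and has no jump across $P_0$ — it is wasteful to pay for a crack that the linear-in-$z$ part does not need. So one should try to heal part of the crack. Concretely, I would compare with a competitor that, in a sub-cylinder, removes a small piece of $P_0$ and replaces $u_\delta$ by a harmonic-type extension, or alternatively uses the known fact (from the energy estimate for minimizers announced in the abstract, and from $\varepsilon$-regularity / monotonicity arguments, cf. \cite{Bonnetd,d}) that a genuine minimizer with this singular set would have a specific blow-up / energy density at the front. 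The cleanest route: show that the normalized energy of $(u_\delta,P_0)$ in cylinders $B_r$ of radius $r$ centered on the front grows like $c_0 r^2 + \delta^2 r^2 \cdot(\text{volume factor})$, i.e. the $\delta$-term adds a bulk contribution scaling like $r^N = r^3$ relative to... — actually since $\int_{B_r}\delta^2 \sim \delta^2 r^3$ while the cracktip energy scales like $r^2$ in the 2D slice times $r$, i.e. also $r^3$ in 3D — so the scalings match and one cannot win by pure scaling; the gain must come from a genuine competitor construction. I would therefore build an explicit competitor: in a cylinder, cut $P_0$ along a curve and let $u$ be the minimizer of the Dirichlet energy with the prescribed boundary data and reduced crack, then estimate that for small enough cylinder the Dirichlet energy saved (because $\delta z$ no longer needs a jump, and even the $\varphi_0$ part can be partially regularized over a short crack) exceeds the surface energy added. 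The main obstacle is precisely this: getting a sharp enough two-sided energy estimate to see that the competitor strictly wins — the cracktip part $\varphi_0$ alone is a minimizer, so the strict gain must be extracted entirely from the interaction/presence of the $\delta z$ term, which requires care to make quantitative. I expect to use the local energy estimate for minimizers stated later in the paper as the key comparison tool, together with a contradiction argument: if $(u_\delta,P_0)$ were minimizing, its blow-ups would be global minimizers (by the compactness theory), forcing the blow-up at a front point to be the crack-front $(u_0,P_0)$, and then a second-order / strict-inequality analysis of the energy would contradict $\delta \neq 0$.
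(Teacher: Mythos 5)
Your treatment of the stationarity part is essentially the paper's argument: decompose $u_\delta=u_0+\delta z$, use $\int \mathrm{div}\,\eta\,dx=0$ for the constant $\delta^2$ contribution, kill the $\delta$-linear terms by integrating in $z$ (Fubini, $\nabla u_0$ independent of $z$, and the 2D Euler equation of $\varphi_0$ for the term involving $\nabla\eta^3$), and note the surface term is unchanged. That part is fine.

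The non-minimality part, however, contains a genuine gap, and it originates in a scaling miscalculation. You assert that the crack-front energy of $(u_0,P_0)$ in a ball $B_R$ scales like $R^3$, ``so the scalings match and one cannot win by pure scaling.'' This is false: since $|\nabla\varphi_0|^2=\tfrac{C_0^2}{4r}$, the 2D Dirichlet energy in a disc of radius $R$ scales like $R$, hence the 3D crack-front Dirichlet energy in $B_R$ scales like $R^2$, and $\mathcal{H}^2(P_0\cap B_R)\sim R^2$ as well; only the extra term $\delta^2|B_R|\sim\delta^2R^3$ is cubic. Consequently the completely standard competitor does win by pure scaling, and it is exactly what the paper uses: take $K'=(P_0\setminus B(0,R))\cup\partial B(0,R)$ and $v=u_\delta\,\mathbf{1}_{\R^3\setminus B(0,R)}$; minimality would force
\begin{equation*}
\int_{B(0,R)}|\nabla u_\delta|^2\,dx+\mathcal{H}^2(P_0\cap B(0,R))\;\le\;\mathcal{H}^2(\partial B(0,R)),
\end{equation*}
and since $|\nabla u_\delta|^2\ge\delta^2$ pointwise, the left side is at least $c\,\delta^2R^3$ while the right side is $C R^2$ --- a contradiction for $R$ large when $\delta\neq0$. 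Your alternative program (healing part of the crack, blow-up compactness, a ``second-order / strict-inequality analysis'') is never carried out --- you yourself flag the quantitative step as the main obstacle --- so as written the ``only if'' direction is not proved; moreover that machinery is unnecessary once the scaling is computed correctly. The ``if'' direction ($\delta=0$ minimizing) is, as in the paper, a citation to Bonnet--David, which you implicitly and acceptably rely on.
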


It is worth mentioning that constructing an example of non-minimizing stationary couple is rather easy. For instance take a line $L\subset \R^2$ as singular set and two constants on each sides as function, that we denote by $u_L$. It is easy to see that this couple $(u_L,L)$ is stationary, but non minimizing in $\R^2$. On the other hand it is a topological minimizer in the sense of Bonnet \cite{b}.

More precisely, the way one usually proves that a line $L$ is non minimizing in $\R^2$, is to cut the line, i.e. considering a competitor of the form $L \setminus B(0,R)$, for some $R$ large enough (and find a convenient associated competitor $v$ for the function $u_L$ as well, by use of a cutoff function). 

This competitor is not admissible in the class of topological competitors of Bonnet \cite{b}, since the connected components of $\R^2 \setminus L$ has been changed. This is why, $(u_L,L)$ remains to be a topological minimizer while it is not a simple minimizer. 

According to our knowledge, all the non-minimizing stationary couples that are known so far are of the same type.

As a matter of fact, unlike the example of $(u_L,L)$, the stationary example $(u_\delta,P_0)$ is even not a topological minimizer since, in the case of the cracktip,   $\R^3\setminus P_0$ is connected and consequently,  the topological condition is trivial.   Anecdotally, this would mean that   $(u_\delta,P_0)$ might be the first known example of stationary non topological-minimizer.

Next, let us restrict  $u_\delta$ on the boundary of  the cylinder 
$$\mathcal{C}:=B_{2D}(0,1) \times [-1,1],$$
and consider the problem
\begin{eqnarray}
\min \big\{ \int_{\mathcal{C}}|\nabla u|^2 \;dx + \mathcal{H}^2(S_u) \; ; \;  u \in SBV(\mathcal{C})  \text{ and } u=u_\delta \text{ on } \partial \mathcal{C}\big\}. \label{problem0}
\end{eqnarray}

It is clear from the $SBV$ theory  that the above problem admits a minimizer, and it  is  very natural to ask the following 

\begin{question}\label{question1} Is $u_\delta$ a solution of Problem \eqref{problem0} for some $\delta \not = 0$ ?
\end{question}

This question can be motivated by a related result by Mora and Morini   \cite{moriniLocal}, which says in some cases  that  ``stationarity implies minimality''. More precisely, the result in \cite{moriniLocal} is the following: take a smooth curve $K$ which cuts the unit  ball of $\R^2$ in two parts and assume that $u$ is a function such that  $(u,K)$ is stationary in the ball. Then there exists a neighborhood of $K$ in which $(u,K)$ is minimizing. The proof uses some calibration technics.

The statement applies for instance to the simple example mentioned earlier of a line in $\R^2$, with two constants on each sides as function $u$. This couple is stationary in  $\R^2$, it is not minimizing in $\R^2$, but it is minimizing if one restrict it on a bounded and thin enough cylinder around $K$ (the thickeness depending on the ``jump'', i.e.  the difference of the two constants).

It is then tempting to consider the same situation for the function $u_\delta$. It is even more intriguing thinking that no one has even proved the existence of a calibration for the cracktip so far, which means that an analogue of Mora and Morini's result for a cracktip situation seems  nowadays out of reach by use of similar technics. Subsequently, it might not be possible to  answer to Question~\ref{question1} by use of a calibration.

On the other hand it is easy to exclude a too large $\delta$, namely we prove the following.

\begin{proposition} \label{prop1} If $\delta > \sqrt{3-\frac{2}{\pi}} $ then $u_\delta$ is not a minimizer of problem \eqref{problem0}.
\end{proposition}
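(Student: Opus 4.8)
The plan is to compute explicitly the Mumford--Shah energy of the pair $(u_\delta,P_0)$ on the cylinder $\mathcal C$ and to compare it against a single, very cheap competitor. For the energy of $(u_\delta,P_0)$: since $P_0\cap\mathcal C$ is the rectangle $\{(x,0,z):-1\le x\le 0,\ -1\le z\le 1\}$ we have $\mathcal H^2(P_0\cap\mathcal C)=2$. For the bulk term, writing $u_\delta=\varphi_0(r,\theta)+\delta z$ one gets $|\nabla u_\delta|^2=|\nabla\varphi_0|^2+\delta^2$, and a direct computation in polar coordinates gives $|\nabla\varphi_0|^2=\frac{1}{2\pi r}$, so that $\int_{B_{2D}(0,1)}|\nabla\varphi_0|^2\,dx=1$. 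Integrating over $z\in[-1,1]$ and using $|\mathcal C|=2\pi$,
\[
\int_{\mathcal C}|\nabla u_\delta|^2\,dx=2+2\pi\delta^2,\qquad\text{whence}\qquad J(u_\delta,P_0,\mathcal C)=4+2\pi\delta^2 .
\]

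Next I would bound the infimum in \eqref{problem0} from above by testing with the function equal to $0$ inside $\mathcal C$ (and to $u_\delta$ outside). Its jump set is contained in $\partial\mathcal C$, its Dirichlet energy vanishes, and $\mathcal H^2(\partial\mathcal C)=(2\pi)(2)+2\pi=6\pi$ ($2\pi\cdot2$ for the lateral surface, $2\pi$ for the two unit disks), so the infimum in \eqref{problem0} is at most $6\pi$. If one prefers a competitor matching the boundary datum in the strict trace sense, one keeps $u_\delta$ on a thin shell $\mathcal C\setminus\overline{\mathcal C_\varepsilon}$, where $\mathcal C_\varepsilon\Subset\mathcal C$ is a slightly shrunken cylinder, and sets the function to $0$ on $\mathcal C_\varepsilon$, jumping along $\partial\mathcal C_\varepsilon$; as $\varepsilon\to0$ the energy of this competitor tends to $6\pi$, because $\int_{\mathcal C\setminus\mathcal C_\varepsilon}|\nabla u_\delta|^2\to0$ (the $1/r$ term is integrable near the shell) and $\mathcal H^2(\partial\mathcal C_\varepsilon)\to6\pi$. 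Combining the two computations, $u_\delta$ cannot be a solution of \eqref{problem0} as soon as $J(u_\delta,P_0,\mathcal C)>6\pi$, i.e.
\[
4+2\pi\delta^2>6\pi\iff\delta^2>3-\frac{2}{\pi},
\]
which is exactly the claim.

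There is no real obstacle: the whole argument is an elementary energy comparison. The two points that deserve a line of care are the normalization of $\varphi_0$ --- it is chosen precisely so that $|\nabla\varphi_0|^2=1/(2\pi r)$ and the planar cracktip has Dirichlet energy $1$ on the unit disk, which is what produces the exact constant $3-\frac{2}{\pi}$ --- and the (standard) fact that in \eqref{problem0} a jump of the competitor along $\partial\mathcal C$ is admissible and charged with its full $\mathcal H^2$ measure, which is the very formulation for which the existence of a minimizer quoted above holds. Let me also note that this bound is certainly not sharp: a competitor which is still vertically constant but keeps the planar profile $\varphi_0$ and is allowed one horizontal cut already does better; we content ourselves here with the crudest estimate, which suffices to exclude large $\delta$.
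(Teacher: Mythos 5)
Your argument is correct and is essentially the paper's own proof: the same competitor (keep $u_\delta$ on a thin shell, set the function to $0$ inside a slightly shrunken cylinder, pay $\mathcal{H}^2(\partial\mathcal{C}_\varepsilon)\to 6\pi$) compared against $J(u_\delta,P_0,\mathcal{C})=4+2\pi\delta^2$, giving $\delta^2\le 3-\frac{2}{\pi}$ for minimality. The only (harmless) omission is that the competitor's jump set also contains $P_0\cap(\mathcal{C}\setminus\overline{\mathcal{C}_\varepsilon})$, whose area is $O(\varepsilon)$ and so disappears in the limit.
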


At the opposite, concerning small $\delta>0$, we have the following energy estimate.

\begin{proposition} \label{propsitionsdelta}Let $s(\delta):=J(u_\delta,K_\delta)-\min J$. Then $s(\delta)=o(\delta^2)$ as $\delta\to 0$.
\end{proposition}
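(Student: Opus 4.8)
The plan is to estimate the gap $s(\delta) = J(u_\delta,K_\delta) - \min J$ from above by exhibiting, for each small $\delta>0$, a good competitor $(v_\delta, K_\delta)$ for the minimization problem (which I take to be on the cylinder $\mathcal C$ with boundary datum $u_\delta$ — the same problem \eqref{problem0}, with $K_\delta$ the singular set in the admissible competitor and $\min J$ the infimum of the energy), and by simultaneously bounding $J(u_\delta,K_\delta)$ explicitly.

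First I would compute $J(u_\delta, P_0\cap\mathcal C)$ exactly. Since $u_\delta = \varphi_0 + \delta z$ and $\varphi_0$ depends only on $(r,\theta)$, we have $|\nabla u_\delta|^2 = |\nabla\varphi_0|^2 + \delta^2$, so
$$J(u_\delta,P_0) = J(u_0,P_0) + \delta^2 |\mathcal C| = \min_{\delta=0} J + \delta^2\,|\mathcal C|,$$
using that $u_0$ is the (3D cracktip) minimizer for the datum $u_0 = \varphi_0$ and that the surface term $\mathcal H^2(P_0\cap\mathcal C)$ is unchanged. Thus $J(u_\delta,K_\delta) - \min_{\delta} J \le \delta^2|\mathcal C| + (\min_{\delta=0}J - \min_\delta J)$, and the task reduces to showing $\min_\delta J \ge \min_{\delta=0}J + \delta^2|\mathcal C| - o(\delta^2)$, i.e. to building competitors for the $\delta$-problem whose energy is at most $\min_{\delta=0}J + \delta^2|\mathcal C| + o(\delta^2)$.

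The natural construction is to take a (near-)optimal pair $(w, \Gamma)$ for the $\delta=0$ problem — so $w = w$ realizes (up to $\varepsilon$) the value $\min_{\delta=0}J$ with boundary datum $\varphi_0$ — and to correct it to match the boundary datum $\varphi_0 + \delta z$. The cleanest correction is the additive one: set $v_\delta := w + \delta z$ on $\mathcal C\setminus\Gamma$, with singular set $\Gamma$ unchanged. Then $v_\delta = \varphi_0 + \delta z = u_\delta$ on $\partial\mathcal C$, the pair is admissible, and $|\nabla v_\delta|^2 = |\nabla w|^2 + 2\delta\,\partial_z w + \delta^2$, so
$$J(v_\delta,\Gamma) = J(w,\Gamma) + \delta^2|\mathcal C| + 2\delta\int_{\mathcal C\setminus\Gamma}\partial_z w\,dx.$$
Hence $s(\delta) \le \delta^2|\mathcal C| + 2\delta\int_{\mathcal C}\partial_z w\,dx + \varepsilon - \big(\min_\delta J - \min_{\delta=0}J\big)$ — wait, I should be careful with signs; the point is that after also using the reverse competitor $u_\delta \mapsto u_\delta$ as a competitor for the $\delta=0$ problem (subtracting $\delta z$), one obtains a matching lower bound, and the two linear-in-$\delta$ terms involve $\int \partial_z w$ and $\int \partial_z u_\delta = 0$ respectively. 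The key observation making the linear terms negligible is a symmetry/parity argument: the problem with datum $\varphi_0$ on the cylinder $\mathcal C = B_{2D}(0,1)\times[-1,1]$ is invariant under $z\mapsto -z$, so one may choose the almost-minimizer $w$ to be even in $z$ (symmetrize it), whence $\partial_z w$ is odd in $z$ and $\int_{\mathcal C\setminus\Gamma}\partial_z w\,dx = 0$ (provided $\Gamma$ is chosen symmetric, which symmetrization also gives). Therefore the linear term vanishes and $s(\delta)\le \delta^2|\mathcal C| + \varepsilon + (\text{analogous error})$. Combined with the exact computation $J(u_\delta,K_\delta)\le \min_{\delta=0}J + \delta^2|\mathcal C|$, taking $\varepsilon\to 0$ carefully along the two-sided comparison yields $s(\delta)/\delta^2 \to 0$.

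The main obstacle is making the two-sided comparison rigorous while controlling the singular sets: one must ensure that using $(w+\delta z,\Gamma)$ as a competitor for the $\delta$-problem and $(u_\delta - \delta z, K_\delta) = (u_0, K_\delta)$ as a competitor for the $0$-problem is legitimate (admissibility, finiteness, that $\min J$ is actually attained so that $K_\delta$ exists), and that the symmetrization step — replacing an almost-minimizer by its even-in-$z$ reflection average — does not increase energy and produces a rectifiable symmetric singular set. A subtlety is that $SBV$ reflection/averaging must be done at the level of the competitor $u$ with its jump set, and one should invoke the lower semicontinuity and the structure theory from \cite{afp} rather than manipulate $\Gamma$ directly. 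Once the even representative is secured, the vanishing of $\int\partial_z w$ is immediate and the $o(\delta^2)$ bound follows; the lower bound $s(\delta)\ge 0$ is trivial, so no lower estimate on $s$ is needed.
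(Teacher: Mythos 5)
Your reduction is fine up to the identity $s(\delta)=\delta^2|\mathcal C|+\min_{\delta=0}J-\min_\delta J$, but from there the argument goes in the wrong direction. To get $s(\delta)=o(\delta^2)$ you need a \emph{lower} bound on $\min_\delta J$, i.e.\ a lower bound on the energy of the unknown minimizer $(\bar u_\delta,\bar K_\delta)$; your construction of the competitor $(w+\delta z,\Gamma)$ for the $\delta$-problem only bounds $\min_\delta J$ from \emph{above}, hence bounds $s(\delta)$ from \emph{below}, which is trivial since $s(\delta)\ge 0$. The correct direction is the ``reverse'' comparison you only sketch: use $(\bar u_\delta-\delta z,\bar K_\delta)$ as a competitor for the $\delta=0$ problem, which gives exactly the paper's preliminary bound \eqref{ineq00}, namely $s(\delta)\le 2\delta^2|\mathcal C|-2\delta\int_{\mathcal C\setminus\bar K_\delta}\partial_z\bar u_\delta\,dx$. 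But now the troublesome linear term involves the \emph{minimizer} $\bar u_\delta$, not a function you are free to choose, so the symmetrization-in-$z$ trick is unavailable: the $\delta$-problem is not even in $z$ (its datum is $\varphi_0+\delta z$), you cannot replace $\bar u_\delta$ by an even representative, and there is no a priori reason that $\int\partial_z\bar u_\delta=\delta|\mathcal C|+o(\delta)$ --- which is precisely what must be proved. (Also, your parenthetical claim ``$\int\partial_z u_\delta=0$'' is false: $\partial_z u_\delta\equiv\delta$.) Without quantitative information on $\bar u_\delta$ the comparison stops at $O(\delta^2)$ at best, not $o(\delta^2)$.

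The missing ingredients are exactly what the paper supplies. It first sharpens \eqref{ineq00} by slicing: for each fixed $z_0$ the 2D cracktip minimizes the planar Mumford--Shah energy with its own datum, so comparing slice by slice and converting $\int_{-1}^1\mathcal H^1(\bar K_\delta\cap\{z=t\})\,dt$ into $\int_{\bar K_\delta}|\sin\theta(x)|\,d\mathcal H^2$ by the coarea formula yields $s(\delta)\le\int_{\mathcal C}\bigl(\delta^2-|\partial_z\bar u_\delta|^2\bigr)dx$. Then, on every vertical ray that does not meet $\bar K_\delta$, the boundary condition at $z=\pm1$ and one-dimensional Dirichlet minimality of $z\mapsto\delta z$ make the integrand's ray-integral nonpositive, so only the ``bad'' rays meeting $\bar K_\delta$ contribute, each at most $2\delta^2$; finally the stability theory of Mumford--Shah minimizers (Hausdorff convergence $\bar K_\delta\to P_0$ as $\delta\to0$, cited from David's book) shows the bad rays have measure $\varepsilon(\delta)\to0$, giving $s(\delta)=o(\delta^2)$. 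None of these steps (slice minimality, coarea, ray-wise 1D minimality, Hausdorff convergence of $\bar K_\delta$) appears in your proposal, and they cannot be replaced by the even-in-$z$ symmetrization idea, so as written the proof has a genuine gap.
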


In a last section we give some numerical results showing what would be the shape of a minimizer for problem \eqref{problem0}.

%%%%%%%%%%%%%%%%%%%%%%%%%%%%%%%%%%%%%%%

\section{Stationarity for the cracktip in 2D}
\label{section2}
As we said before the stationarity of the cracktip is a direct consequence of \cite{Bonnetd} but we shall give here a direct and independent proof. 

\begin{proposition} The cracktip   $(\varphi_0,K_0)$ is stationary.  
\end{proposition}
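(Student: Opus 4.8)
The strategy is to verify the Euler--Lagrange equation \eqref{equation} directly for $(\varphi_0,K_0)$ in $\R^2$, splitting the vector field $\eta \in C^1_c(\R^2)^2$ into a part tangent to $K_0$ and a part transverse to it, since the surface term ${\rm div}^{K_0}\eta$ on the slit $K_0 = \;]-\infty,0]\times\{0\}$ only sees the horizontal component. Concretely, writing $\eta = (\eta_1,\eta_2)$, one has ${\rm div}^{K_0}\eta = \partial_1 \eta_1$ along $K_0$, so $\int_{K_0} {\rm div}^{K_0}\eta \, d\mathcal{H}^1 = -\eta_1(0,0)$ after integrating the total derivative along the half-line (the contribution at $-\infty$ vanishes by compact support). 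So the whole identity reduces to showing that the bulk term equals $\eta_1(0,0)$. The natural tool here is to integrate by parts the bulk integral away from the tip: on $B(0,\varepsilon)^c$ the function $\varphi_0$ is smooth and harmonic with zero Neumann condition on $K_0$, and the inner-variation identity $\int |\nabla \varphi_0|^2 {\rm div}\,\eta - 2\langle \nabla \varphi_0, \nabla\varphi_0 \cdot \nabla\eta\rangle \,dx$ is exactly the divergence of the energy--momentum (Eshelby) tensor $T = |\nabla\varphi_0|^2 \mathrm{Id} - 2\nabla\varphi_0 \otimes \nabla\varphi_0$, which is divergence-free where $\varphi_0$ is harmonic.

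Thus I would proceed as follows. First, fix $\eta$ and excise a small disk $B(0,\varepsilon)$. On $\R^2 \setminus B(0,\varepsilon)$, use ${\rm div}(T\eta) = ({\rm div}\,T)\cdot \eta + T : \nabla \eta$ together with ${\rm div}\, T = 0$ to rewrite the bulk integrand as a pure divergence, then apply the divergence theorem. The boundary of the region consists of the two sides of the slit $K_0 \cap B(0,\varepsilon)^c$ and the circle $\partial B(0,\varepsilon)$. On the two faces of the slit, the relevant normal is vertical and $\partial_2 \varphi_0 = 0$ there (Neumann condition), which makes the slit boundary terms collapse to precisely the $-\int_{K_0}{\rm div}^{K_0}\eta$ contribution after one more integration by parts along $K_0$ — this is the step where the surface term of \eqref{equation} gets cancelled. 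Second, analyze the circular boundary term over $\partial B(0,\varepsilon)$ as $\varepsilon \to 0$: since $|\nabla \varphi_0|^2 \sim \frac{1}{2\pi r}$, the tensor $T$ is of size $\varepsilon^{-1}$ on a circle of length $2\pi\varepsilon$, so the integral is $O(1)$ and one must compute its exact limit. Using the explicit form $\varphi_0 = \sqrt{2r/\pi}\sin(\theta/2)$, plug in polar coordinates, Taylor-expand $\eta(\varepsilon\cos\theta,\varepsilon\sin\theta) = \eta(0,0) + O(\varepsilon)$, and evaluate the resulting trigonometric integral over $\theta \in ]-\pi,\pi[$; the claim is that the limit equals exactly $-\eta_1(0,0)$, which combined with the slit term yields \eqref{equation}.

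The main obstacle I anticipate is the $\varepsilon \to 0$ analysis of the circular term: one has to be careful that $\nabla\varphi_0$ blows up like $r^{-1/2}$, so $|\nabla\varphi_0|^2 \in L^1_{\mathrm{loc}}$ but $T$ is only barely integrable, and one must justify that the $O(\varepsilon)$ error from Taylor-expanding $\eta$ actually contributes $o(1)$ (it does, since $\varepsilon^{-1}\cdot \varepsilon \cdot \varepsilon = o(1)$, but this needs to be stated cleanly) and then correctly carry out the explicit angular integration using the half-angle structure of $\varphi_0$. A secondary technical point is justifying the integration by parts near the tip in the first place — i.e. that the distributional identity \eqref{equation} can be tested by first removing $B(0,\varepsilon)$ and passing to the limit, which follows from $|\nabla\varphi_0|^2 \in L^1_{\mathrm{loc}}(\R^2)$ and dominated convergence for the bulk term. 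Once these are in hand, the two boundary contributions assemble to give the identity, completing the proof.
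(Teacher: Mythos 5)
Your strategy is in substance the same as the paper's proof: excise $B(0,\varepsilon)$, use that the Eshelby tensor $T=|\nabla\varphi_0|^2\,\mathrm{Id}-2\nabla\varphi_0\otimes\nabla\varphi_0$ is divergence-free where $\varphi_0$ is harmonic (the paper performs the equivalent two integrations by parts), exploit $\partial_\nu\varphi_0=0$ on $K_0$ together with the absence of a jump of $|\nabla\varphi_0|^2=\tfrac{1}{2\pi r}$ across $K_0$, and compute the limiting circle term at the tip via the half-angle identities; your justification of the $\varepsilon\to 0$ passage ($|\nabla\varphi_0|^2\in L^1_{\mathrm{loc}}$, the $O(\varepsilon)$ Taylor error of $\eta$ contributing $o(1)$) is also exactly the paper's.

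However, two parts of your accounting are wrong as written. First, the surface term has the wrong sign: $\int_{K_0}{\rm div}^{K_0}\eta\, d\mathcal{H}^1=\int_{-\infty}^{0}\partial_1\eta_1(t,0)\,dt=+\eta_1(0,0)$, not $-\eta_1(0,0)$; hence the identity reduces to showing that the bulk term equals $-\eta_1(0,0)$, which is precisely the value you later announce for the circular limit, so your intermediate reduction (``bulk $=+\eta_1(0,0)$'') contradicts your own conclusion. Second, and more substantively, the slit faces do not ``collapse to $-\int_{K_0}{\rm div}^{K_0}\eta$ after one more integration by parts along $K_0$'': on the upper and lower faces the boundary integrand is $\langle T\eta,\mp {\bf e}_2\rangle=\mp|\nabla\varphi_0|^2\,\eta_2$ (using $\partial_2\varphi_0=0$ there), which involves only $\eta_2$, so no tangential integration by parts can produce $\partial_1\eta_1$; in fact the two faces cancel each other exactly, because $|\nabla\varphi_0|^2=\tfrac{1}{2\pi r}$ has the same trace from both sides of $K_0$. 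The surface term is cancelled not by the slit contributions but by the circular boundary term at the tip, whose limit is $-\eta_1(0,0)$. As stated, your bookkeeping would yield either $-2\eta_1(0,0)$ or a vacuous identity; once the sign of the surface term is corrected and the roles of the slit and tip terms are reassigned (slit terms vanish, the tip term does the cancelling), your plan becomes precisely the paper's argument.
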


\begin{proof}   Let $\eta \in C^1_c(\Omega)^2$. Then integrating by parts on $K_0$ gives 
$$\int_{K_0}{\rm div}^{K_0}\, \eta \, d\mathcal{H}^{1}= \eta(0) \cdot {\bf e}_1 .$$
Next concerning the other terms in \eqref{equation}, as the integration by parts in the fractured domain $\R^2\setminus K_0$ is not clear,  we consider the Lipschitz domain $\Omega_\varepsilon:=\R^2\setminus B(0,\varepsilon)$ and we notice that 
$$\lim_{\varepsilon \to 0}\int_{B_\varepsilon}|\nabla \varphi_0|^2 {\rm div} \,\eta-2\langle \nabla \varphi_0, \nabla \varphi_0 \cdot \nabla \eta\rangle \;dx =0.$$

Consequently, to prove the Euler-Lagrange equation  \eqref{equation}  it is enough to prove 
\begin{eqnarray}
\lim_{\varepsilon \to 0}\int_{\Omega_\varepsilon}|\nabla \varphi_0|^2 {\rm div} \,\eta-2\langle \nabla \varphi_0, \nabla \varphi_0 \cdot \nabla \eta\rangle \;dx =-\eta(0)\cdot {\bf e}_1.
\end{eqnarray}
For this purpose we integrate by parts in the Lipschitz domain $\Omega_\varepsilon$  yielding 
$$\int_{\Omega_\varepsilon}|\nabla \varphi_0|^2 {\rm div}\, \eta=-2\int_{\Omega_\varepsilon}\langle \eta, \nabla^2 \varphi_0\cdot \nabla \varphi_0 \rangle dx - \int_{K_0\cap \partial \Omega_\varepsilon}[|\nabla \varphi_0|^2]^\pm \langle\eta , \nu \rangle - \int_{\partial B_\varepsilon}|\nabla \varphi_0|^2\langle\eta , \nu \rangle$$
where  $\nu$ in the last integral denotes the inner normal vector, and  $[|\nabla \varphi_0|^2]^\pm$ is the jump of $|\nabla \varphi_0|^2$ on $K_0$. Actually it is easy to check from the very definition of $\varphi_0$ that $[|\nabla \varphi_0|^2]^\pm=0$ on $K_0$ (the details will be given  just below) thus
\begin{eqnarray}
\int_{\Omega_\varepsilon}|\nabla \varphi_0|^2 {\rm div}\, \eta=-2\int_{\Omega_\varepsilon}\langle \eta, \nabla^2 \varphi_0\cdot \nabla \varphi_0 \rangle dx  - \int_{\partial B_\varepsilon}|\nabla \varphi_0|^2\langle\eta , \nu \rangle. \label{firstterm}
\end{eqnarray}
Indeed, to check that $[|\nabla \varphi_0|^2]^\pm=0$ we compute $|\nabla \varphi_0|^2$ in polar coordinates. Let us denote $C_0:=\sqrt{\frac{2}{\pi}}$ so that $\varphi_0=C_0\sqrt{r}\sin(\theta/2)$. We get
\begin{eqnarray}
\frac{\partial}{\partial r} \varphi_0(r,\theta)=  C_0\frac{1}{2\sqrt{r}}\sin(\theta/2). \label{grad1}
\end{eqnarray}
\begin{eqnarray}
\frac{\partial }{\partial \theta} \varphi_0(r,\theta)= C_0\sqrt{r}\frac{1}{2}\cos(\theta/2) \label{grad2}
\end{eqnarray}
so that 
$$|\nabla \varphi_0|^2=\left| \frac{\partial }{\partial r} \varphi_0\right|^2 + \left| \frac{\partial }{r\partial \theta} \varphi_0\right|^2=\frac{C_0^2}{4r}.$$
In particular it does not depend on $\theta$,   the claim is now proven and so as \eqref{firstterm}.

Next, using that $\varphi_0$ has zero normal derivative on $K_0$, and is harmonic in $\Omega_\varepsilon$,
\begin{eqnarray}
2\int_{\Omega_\varepsilon} \langle \nabla \varphi_0 , \nabla \varphi_0 \cdot \nabla \eta \rangle
&=&-2\int_{\Omega_\varepsilon}[\Delta \varphi_0\langle \eta,\nabla \varphi_0 \rangle+\langle \eta, \nabla^2\varphi_0\cdot \nabla \varphi_0 \rangle]dx +2\int_{\partial B(0,\varepsilon)} \frac{\partial \varphi_0}{\partial \nu} \langle \eta , \nabla \varphi_0\rangle \notag \\
&=& -2\int_{\Omega_\varepsilon}\langle \eta, \nabla^2\varphi_0\cdot \nabla \varphi_0 \rangle dx +2\int_{\partial B(0,\varepsilon)} \frac{\partial \varphi_0}{\partial \nu} \langle \eta , \nabla \varphi_0\rangle. \notag
\end{eqnarray}
Then subtracting with \eqref{firstterm} we finally get
$$\int_{\Omega_\varepsilon}|\nabla \varphi_0|^2 {\rm div}\, \eta-2\int_{\Omega_\varepsilon} \langle \nabla \varphi_0 , \nabla \varphi_0 \cdot \nabla \eta \rangle= - \int_{\partial B(0,\varepsilon)}|\nabla \varphi_0|^2\langle\eta , \nu \rangle-2\int_{\partial B(0,\varepsilon)} \frac{\partial \varphi_0}{\partial \nu} \langle \eta , \nabla \varphi_0\rangle.$$
We want to take the limit as $\varepsilon \to 0$. We treat the two terms separately, beginning with the first one, 

\begin{eqnarray}
-\int_{\partial B_\varepsilon}|\nabla \varphi_0|^2\langle\eta , \nu \rangle =-\int_{-\pi}^\pi \frac{C_0^2}{4\varepsilon}\langle\eta , \nu \rangle \varepsilon d\theta &=& \frac{C_0^2}{4} \int_{-\pi}^\pi \langle \eta(\varepsilon,\theta) ,  (\cos(\theta),\sin(\theta))\rangle d\theta \notag \\
&=&   \frac{C_0^2}{4} \int_{-\pi}^\pi   \eta_1(\varepsilon,\theta)\cos(\theta)+\eta_2(\varepsilon,\theta)\sin(\theta) d\theta. \notag
\end{eqnarray}
Now as $\varepsilon \to 0$ we get 
\begin{eqnarray}
  \frac{C_0^2}{4} \int_{-\pi}^\pi   \eta_1(\varepsilon,\theta)\cos(\theta)+\eta_2(\varepsilon,\theta)\sin(\theta) d\theta &\to_{\varepsilon \to 0}& \frac{C_0^2}{4} \int_{-\pi}^\pi   \eta_1(0)\cos(\theta)+\eta_2(0)\sin(\theta) d\theta \notag \\
  &=& 0. \notag
  \end{eqnarray}

Next we compute the contribution of the second term,  denoting by $\tau$ and $-\nu$ the tangential and normal unit vectors on the circle $\partial B(0,\varepsilon)$,
\begin{eqnarray}
-2\int_{\partial B(0,\varepsilon)} \frac{\partial \varphi_0}{\partial \nu} \langle \eta , \nabla \varphi_0\rangle&=& 2\int_{-\pi}^\pi \frac{\partial \varphi_0}{\partial r} \langle \eta , \nabla \varphi_0\rangle\; \varepsilon \; d\theta \notag \\
&=& 2\int_{-\pi}^\pi \frac{\partial \varphi_0}{\partial r} \left( \langle \eta, \tau \rangle  \frac{\partial \varphi_0}{\varepsilon \partial \theta} +  \langle \eta, \nu \rangle \frac{\partial \varphi_0}{\partial r} \right)\varepsilon \;d\theta .\label{III}
\end{eqnarray}
Then we compute  using \eqref{grad1} and \eqref{grad2},
\begin{eqnarray}
 \frac{\partial \varphi_0}{\partial r}  \langle \eta, \tau \rangle  \frac{\partial \varphi_0}{\varepsilon \partial \theta}&=&\frac{C_0^2}{4\varepsilon} \sin(\theta/2)\cos(\theta/2)\langle(\eta_1,\eta_2),(-\sin(\theta),\cos(\theta))\rangle \notag \\
 &=&  -\frac{C_0^2}{4\varepsilon} \eta_1 \sin(\theta) \sin(\theta/2)\cos(\theta/2) +\frac{C_0^2}{4\varepsilon} \eta_2 \cos(\theta) \sin(\theta/2)\cos(\theta/2) \label{A1} 
 \end{eqnarray}
and 
\begin{eqnarray}
 \frac{\partial \varphi_0}{\partial r}   \langle \eta, \nu \rangle \frac{\partial \varphi_0}{\partial r}
 &=&  \left(\frac{\partial \varphi_0}{\partial r}\right)^2\langle(\eta_1,\eta_2) , (\cos(\theta),\sin(\theta)) \rangle \notag \\
 &=& \frac{C_0^2}{4\varepsilon}\eta_1 \sin^2(\theta/2)\cos(\theta) +  \frac{C_0^2}{4\varepsilon} \eta_2\sin^2(\theta/2) \sin(\theta). \label{A2} 
\end{eqnarray}

Next we use 
$$\cos(\theta/2)=\cos(\theta-\theta/2)=\cos(\theta)\cos(\theta/2)+\sin(\theta)\sin(\theta/2)$$
$$\sin(\theta/2)=\sin(\theta-\theta/2)=\sin(\theta)\cos(\theta/2)-\cos(\theta)\sin(\theta/2),$$
so that adding \eqref{A1} and \eqref{A2} together yields

$$ \frac{\partial \varphi_0}{\partial r} \langle \eta , \nabla \varphi_0\rangle=-\frac{C_0^2}{4\varepsilon} \eta_1\sin^2(\theta/2)+\frac{C_0^2}{4\varepsilon} \eta_2\sin(\theta/2)\cos(\theta/2).$$
Returning to \eqref{III} we infer that 
\begin{eqnarray}
\lim_{\varepsilon\to 0} -2\int_{\partial B(0,\varepsilon)} \frac{\partial \varphi_0}{\partial \nu} \langle \eta , \nabla \varphi_0\rangle&=&-\frac{C_0^2}{2}\int_{-\pi}^\pi \eta_1(0)\sin^2(\theta/2)+\frac{C_0^2}{2} \eta_2(0)\sin(\theta/2)\cos(\theta/2)\; d\theta \notag \\
&=& -\frac{1}{\pi}\eta_1(0)\int_{-\pi}^\pi  \sin^2(\theta/2) \;d\theta .\notag 
\end{eqnarray}
 Then from the identity $\sin^2(\alpha)=\frac{1-\cos(2\alpha)}{2}$ we deduce that $\int_{-\pi}^\pi  \sin^2(\theta/2) \;d\theta = \pi$ thus
 $$\lim_{\varepsilon\to 0} -2\int_{\partial B(0,\varepsilon)} \frac{\partial \varphi_0}{\partial \nu} \langle \eta , \nabla \varphi_0\rangle = -\eta_1(0)=-\eta(0)\cdot {\bf e}_1$$
 and this finishes the proof of the proposition.
\end{proof}

Using a similar proof in 3D we can prove the following. The details are left to the reader. 

\begin{proposition} The 3D-cracktip   $(u_0,P_0)$ is stationary.  
\end{proposition}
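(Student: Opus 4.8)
The plan is to repeat the two‑dimensional argument above, run fiberwise in the axial variable $z$. Fix $\eta\in C^1_c(\R^3)^3$ and work in cylindrical coordinates $(r,\theta,z)$. Since $u_0$ is independent of $z$, its gradient has no ${\bf e}_z$–component and $|\nabla u_0|^2=|\nabla\varphi_0|^2=\tfrac{C_0^2}{4r}$; in particular this is independent of $\theta$ and $z$, so $[|\nabla u_0|^2]^\pm=0$ across $P_0$, and moreover $u_0$ is harmonic off $P_0$ and has vanishing normal derivative on $P_0$. The only genuine singularity of $u_0$ is the crack‑front $\{r=0\}$, so I would remove the solid cylinder $T_\varepsilon:=\{r<\varepsilon\}$ and integrate by parts on the Lipschitz domain $\Omega_\varepsilon:=\R^3\setminus\overline{T_\varepsilon}$, viewing $P_0\cap\Omega_\varepsilon$ as a two‑sided internal crack. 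Since $\tfrac1r$ is locally integrable near the axis in $\R^3$ and $\eta$ has compact support, $\int_{T_\varepsilon}\big(|\nabla u_0|^2{\rm div}\,\eta-2\langle\nabla u_0,\nabla u_0\cdot\nabla\eta\rangle\big)\,dx\to0$; hence it suffices to show that the corresponding integral over $\Omega_\varepsilon$ tends to $-\int_{\R}\eta(0,0,z)\cdot{\bf e}_1\,dz$ as $\varepsilon\to0$.

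First I would compute the surface term. Since $P_0=\{(x,0,z):x\le0\}$ has constant normal ${\bf e}_2$, one has ${\rm div}^{P_0}\eta=\partial_1\eta_1+\partial_3\eta_3$; integrating over $P_0$, the $\partial_3\eta_3$–contribution vanishes by compact support in $z$, while $\int_{\R}\!\int_{-\infty}^{0}\partial_1\eta_1\,dx\,dz=\int_{\R}\eta_1(0,0,z)\,dz$, so $\int_{P_0}{\rm div}^{P_0}\eta\,d\mathcal{H}^2=\int_{\R}\eta(0,0,z)\cdot{\bf e}_1\,dz$, the $z$–fibered analogue of $\int_{K_0}{\rm div}^{K_0}\eta\,d\mathcal{H}^1=\eta(0)\cdot{\bf e}_1$.

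Next, integrating by parts in $\Omega_\varepsilon$ exactly as above, the internal crack $P_0$ produces a term weighted by $[|\nabla u_0|^2]^\pm$ and one weighted by $\partial u_0/\partial\nu$, both vanishing on $P_0$, so one is left with
$$\int_{\Omega_\varepsilon}\!\big(|\nabla u_0|^2{\rm div}\,\eta-2\langle\nabla u_0,\nabla u_0\cdot\nabla\eta\rangle\big)\,dx=-\int_{\partial T_\varepsilon}|\nabla u_0|^2\langle\eta,\nu\rangle-2\int_{\partial T_\varepsilon}\frac{\partial u_0}{\partial\nu}\langle\eta,\nabla u_0\rangle,$$
$\nu$ being the inner normal on $\{r=\varepsilon\}$, with area element $\varepsilon\,d\theta\,dz$. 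The first boundary integral equals $\tfrac{C_0^2}{4}\int_{\R}\!\int_{-\pi}^{\pi}\big(\eta_1\cos\theta+\eta_2\sin\theta\big)\,d\theta\,dz$ and tends to $0$, since $\eta_i(\varepsilon,\theta,z)\to\eta_i(0,0,z)$ and $\int_{-\pi}^{\pi}\cos\theta\,d\theta=\int_{-\pi}^{\pi}\sin\theta\,d\theta=0$. Because $\nabla u_0$ lies in the $(r,\theta)$–plane, the second boundary integral is precisely the $z$–fibered version of \eqref{III} (the ${\bf e}_z$–component of $\eta$ does not enter); running that computation pointwise in $z$, integrating in $\theta$ and passing to the limit gives $-\tfrac{\pi C_0^2}{2}\int_{\R}\eta_1(0,0,z)\,dz=-\int_{\R}\eta(0,0,z)\cdot{\bf e}_1\,dz$, using $C_0^2=\tfrac2\pi$ and $\int_{-\pi}^{\pi}\sin^2(\theta/2)\,d\theta=\pi$. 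Adding the two contributions and comparing with the surface term yields \eqref{equation}, hence the stationarity of $(u_0,P_0)$.

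I do not expect any genuine obstacle beyond the two‑dimensional case: the only points deserving a word of care are that one excises a solid cylinder around the singular edge rather than a ball, and that the compact support of $\eta$ is what legitimizes the $z$–integrations and the dominated convergence as $\varepsilon\to0$; the rest is the planar argument run unchanged in each fiber $\{z=\mathrm{const}\}$.
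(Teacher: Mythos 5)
Your proof is correct and is exactly the argument the paper intends when it says the 3D case follows by ``a similar proof'' with details left to the reader: you excise a solid cylinder around the crack front instead of a disc, check that the jump of $|\nabla u_0|^2$ and the normal derivative vanish on $P_0$, and run the 2D boundary computation fiberwise in $z$, with the surface term $\int_{P_0}{\rm div}^{P_0}\eta\,d\mathcal{H}^2=\int_{\R}\eta_1(0,0,z)\,dz$ cancelling the limit of the boundary integrals on $\{r=\varepsilon\}$. All the constants and limit passages (local integrability of $1/r$ in $\R^3$, uniform continuity and compact support of $\eta$) check out, so nothing further is needed.
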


\section{Stationarity for $u_\delta$}
\label{section3}

We can now verify the stationarity of $u_\delta$.

\begin{proposition}\label{udelta} The couple  $(u_\delta,P_0)$ is stationary for any $\delta\in \R$.
\end{proposition}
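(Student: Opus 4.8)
The plan is to reduce the statement to the stationarity of the crack-front $(u_0,P_0)$, already established in the previous proposition, and then to show that the linear perturbation $\delta z$ contributes nothing to the Euler--Lagrange expression \eqref{equation}. Write $u_\delta=u_0+\delta x_3$, where $u_0(r,\theta,z)=\varphi_0(r,\theta)$. Since $\nabla u_0$ has vanishing third component, one has $\nabla u_\delta=\nabla u_0+\delta\,{\bf e}_3$ and $|\nabla u_\delta|^2=|\nabla u_0|^2+\delta^2$. Substituting this into the bulk part of \eqref{equation} and expanding the quadratic terms, one gets
\begin{align*}
\int_{\R^3}\big(|\nabla u_\delta|^2\,{\rm div}\,\eta-2\langle\nabla u_\delta,\nabla u_\delta\cdot\nabla\eta\rangle\big)\,dx
=\int_{\R^3}\big(|\nabla u_0|^2\,{\rm div}\,\eta-2\langle\nabla u_0,\nabla u_0\cdot\nabla\eta\rangle\big)\,dx+R_\delta,
\end{align*}
with
\begin{align*}
R_\delta=\delta^2\int_{\R^3}\big({\rm div}\,\eta-2\,\partial_3\eta_3\big)\,dx
-2\delta\int_{\R^3}\Big(\partial_1\varphi_0(\partial_3\eta_1+\partial_1\eta_3)+\partial_2\varphi_0(\partial_3\eta_2+\partial_2\eta_3)\Big)\,dx.
\end{align*}
Since the singular set is unchanged, $\int_{P_0}{\rm div}^{P_0}\eta\,d\mathcal{H}^2$ is the same for $u_\delta$ and for $u_0$; hence, by the previous proposition, proving that $(u_\delta,P_0)$ is stationary amounts to proving $R_\delta=0$ for every $\eta\in C^1_c(\R^3)^3$.

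Next I would dispose of $R_\delta$ term by term. The $\delta^2$ contribution vanishes because $\int_{\R^3}{\rm div}\,\eta\,dx=0$ and $\int_{\R^3}\partial_3\eta_3\,dx=0$ by the divergence theorem and compact support. Among the $\delta$-linear terms, $\int_{\R^3}\partial_i\varphi_0\,\partial_3\eta_i\,dx=0$ for $i=1,2$ simply because $\varphi_0$ is independent of $x_3$: integrating by parts in $x_3$ at fixed $(x_1,x_2)$ leaves $\int_\R\partial_3\eta_i\,dx_3=0$. The remaining term is $\int_{\R^3}\big(\partial_1\varphi_0\,\partial_1\eta_3+\partial_2\varphi_0\,\partial_2\eta_3\big)\,dx$, which I would treat slice by slice in $x_3$ exactly as in Section~\ref{section2}: since $\nabla\varphi_0$ is singular only along the $x_3$-axis, with $|\nabla\varphi_0|\sim r^{-1/2}$, I integrate by parts on $\{r>\varepsilon\}$, use that $\varphi_0$ is harmonic and has vanishing normal derivative on $K_0$ (so both the interior term and the contribution of the slit $K_0$ drop out), and let $\varepsilon\to0$. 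The leftover boundary integral on the cylinder $\{r=\varepsilon\}$ is a constant multiple of $\sqrt\varepsilon\int_{-\pi}^\pi\sin(\theta/2)\,\eta_3\,d\theta$, hence $O(\sqrt\varepsilon)$, and the integral over $\{r<\varepsilon\}$ is $O(\varepsilon^{3/2})$; both bounds being uniform in $x_3$ on the support of $\eta$, the whole term vanishes. Therefore $R_\delta=0$ and $(u_\delta,P_0)$ is stationary for every $\delta\in\R$.

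The computation is elementary, and the only point requiring care is the integration by parts near the singular $x_3$-axis. This is precisely the excision procedure already used for the $2$D crack-tip in Section~\ref{section2}; the crucial simplification here is that $\nabla\varphi_0$ is paired with the \emph{regular} gradient $\nabla\eta_3$ rather than with a second singular factor $\nabla u_0$, so the cylinder boundary term decays like $\sqrt\varepsilon$ and produces no $\eta(0)$-contribution, unlike in the crack-tip case. (Alternatively, one could repeat the whole Section~\ref{section2} computation directly for $u_\delta$ on $\R^3\setminus\{r\le\varepsilon\}$, but the reduction above isolates exactly what is new.)
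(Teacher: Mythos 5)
Your proof is correct and follows essentially the same route as the paper: decompose $u_\delta=u_0+\delta z$, kill the $\delta^2$ and $\partial_3\eta_i$ cross terms by compact support and Fubini, and reduce the remaining term to the slice-wise identity $\int_{\R^2}\langle\nabla_{x,y}\varphi_0,\nabla_{x,y}\eta_3\rangle\,dx\,dy=0$. The only (harmless) difference is that the paper disposes of that last term by citing the Euler equation for the 2D crack-tip, whereas you verify it directly by the excision argument of Section~\ref{section2}, which if anything makes the step more self-contained.
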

\begin{proof} Let $\eta\in C^1_c(\R^3)$. We already know that $u_0$ is stationary, which means 
\begin{eqnarray}
\int_{\R^3}|\nabla u_0|^2 {\rm div} \,\eta-2\langle \nabla u_0, \nabla u_0 \cdot \nabla \eta\rangle \;dx + \int_{P_0}{\rm div}^{P_0}\, \eta \;d\mathcal{H}^{N-1} =0. \label{equation1}
\end{eqnarray}
Let us now check the same equality with $u_\delta$ in place of $u_0$.

Since $u_\delta=u_0+\delta z$ we have
$$\int_{\R^3}|\nabla u_\delta|^2 {\rm div} \,\eta=\int_{\Omega}|\nabla u_0|^2 {\rm div} \,\eta + \int_{\Omega} \delta^2 {\rm div} \,\eta = \int_{\R^3}|\nabla u_0|^2 {\rm div} \,\eta $$
because $\eta$ has compact support thus   $\int_{\R^3}{\rm div}\; \eta =0$. 
In addition, $\nabla u_\delta=\nabla u_0 + \delta {\bf e}_3$ thus

$$\langle \nabla u_\delta, \nabla u_\delta \cdot \nabla \eta\rangle =\langle \nabla u_0, \nabla u_0 \cdot \nabla \eta\rangle+
\langle\delta {\rm e}_3, \nabla u_0 \cdot \nabla \eta\rangle+
\langle \nabla u_0, \delta {\rm e}_3 \cdot \nabla \eta\rangle+
\delta^2\langle {\rm e}_3, {\rm e}_3 \cdot \nabla \eta\rangle.$$
To prove the stationarity of  $u_\delta$, it is enough to prove
\begin{eqnarray}
\int_{\R^3} \langle\delta {\rm e}_3, \nabla u_0 \cdot \nabla \eta\rangle+
\langle \nabla u_0, \delta {\rm e}_3 \cdot \nabla \eta\rangle+
\delta^2\langle {\rm e}_3, {\rm e}_3 \cdot \nabla \eta\rangle dx =0. \label{isZero}
\end{eqnarray}
Let us treat each term separately. We write  by $\eta^k$ for $k=1,2,3$ the components of $\eta$. The first term  turns out to be
$$\int_{\R^3} \langle\delta {\rm e}_3, \nabla u_0 \cdot \nabla \eta\rangle =
\delta \int_{\R^3} \partial_x u_0 \,\partial_z \eta^1 +\partial_y u_0 \,\partial_z \eta^2  .$$
Now the latter is equal to zero because $\eta$ has compact support and $\partial_xu_0$ and $\partial_yu_0$ do not depend on the $z$ variable. Therefore by use of Fubini we can write
$$\delta \int_{\R^3} \partial_x u_0 \,\partial_z \eta^1 +\partial_y u_0 \,\partial_z \eta^2=\delta\int_{\R^2}\partial_x u_0\left(\int_{\R} \partial_z \eta^1 dz \right)dxdy+\delta\int_{\R^2}\partial_y u_0\left(\int_{\R} \partial_z \eta^2 dz \right)dxdy$$
which is equal to zero due to the fact that $\int_{\R} \partial_z \eta^k dz=0$ for $k=1,2$.
\medskip

The second term in \eqref{isZero} is 
$$\int_{\R^3}\langle \nabla u_0, \delta {\rm e}_3 \cdot \nabla \eta\rangle=\delta\int_{\R^3}\langle \nabla u_0 , \nabla \eta^3 \rangle= \delta\int_{\R}\left(\int_{\R^2} \langle \nabla_{x,y} \varphi_0, \nabla_{x,y}\eta^3\rangle dxdy\right) dz=0,$$
because of the Euler equation for the 2D cracktip.

\medskip

Finally we get rid of the third term as follows.

$$\int_{\R^3}\delta^2\langle {\rm e}_3, {\rm e}_3 \cdot \nabla \eta\rangle = \delta^2\int_{\R^3} \partial_z\eta^3 = \delta^2\int_{\R^2} \left(\int_{\R} \partial_z\eta^3 dz\right)dxdy=0.$$
All in all, we just have proved that $u_\delta$ is stationary. 
\end{proof}
\vspace{1cm}
Let us now give a proof of Theorem \ref{mainth}.

\begin{proof}[Proof of Theorem \ref{mainth}] We already know from Proposition \ref{udelta} that $(u_\delta,P_0)$ is stationary, and we know by \cite{Bonnetd} (see also \cite{d}) that it is minimizing for $\delta=0$. Let us prove that it is not minimizing for $\delta\not = 0$. Actually this is a direct consequence of \cite[Proposition 19]{l4}  but let us give here a simple and independent argument, using   the very standard competitor made by taking, for some $R>0$,
$$K':=(P_0\setminus B(0,R) )\cup \partial B(0,R),$$
and 
$$v=u_\delta {\bf 1}_{\R^3\setminus B(0,R)}.$$
The minimality  of $u_\delta$ would imply 
$$\int_{B(0,R)} |\nabla u_\delta|^2 \,dx + \mathcal{H}^2(P_0\cap B(0,R))\leq  \mathcal{H}^2( \partial B(0,R)),$$
and in particular
$$\delta^2 CR^3 \leq C'R^2,$$
which is a contradiction for $R$ large enough.
\end{proof}

\begin{remark} \label{rem-top} 
The competitor constructed in the proof above is admissible in the class of topological minimizers in the sense of Bonnet \cite{b}, since the topological condition   involves the separability of points in $\R^3\setminus B(0,R)$ only, and this separation has not been changed by our competitor (they all lie in the same connected component, as for the original set $P_0$).

However, we can even get a competitor $K'$ such that  $\R^3\setminus K'$ is connected   by ``drilling'' a very small hole on the sphere  $\partial B(0,R)$.

To see this let us take the point $y=(R,0,0)\in \partial B(0,R)$ and consider $ S_\varepsilon=\partial B(0,R) \setminus B(y,\varepsilon) $. 
Let us now take the competitor set $$K'_\varepsilon:=(P_0\setminus B(0,R) )\cup S_\varepsilon.$$
As competitor function,   we take $v_\varepsilon=u_\delta$ in $\R^3\setminus B(0,R)$, and
$$v_\epsilon=u_\delta \varphi_{\varepsilon}$$
inside $B(0,R)$, where  $\varphi_\varepsilon$ is a cutoff function defined as
\begin{equation*}
\begin{cases} \varphi_\varepsilon = 0  & \mbox{in } \R^3\setminus B(y,2{\varepsilon}),
 \\
  v_\varepsilon=1  & \mbox{on } B(y,\varepsilon),
  \end{cases}
\end{equation*}
$|v_\varepsilon|\leq 1$ and $|\nabla v_\varepsilon| \leq  \frac{C}{ \varepsilon}$.
Now we claim that 
$$
\int_{B(0,R)}|\nabla v_\varepsilon|^2\;dx\to 0 \text{ as }{\varepsilon\to 0},
$$
and thus, for the same $R$ as above, and for $\varepsilon$ small enough,
$$J(K'_\varepsilon,v_\varepsilon,B(0,R))< J(P_0,u_\delta,B(0,R)).$$

To prove the claim is suffice to estimate
$$|\nabla v_\varepsilon|^2\leq C( \varphi_\varepsilon^2 |\nabla u_\delta|^2 + u_\delta^2 |\nabla \varphi_\varepsilon|^2)$$
so that
\begin{eqnarray}
\int_{B(0,R)}|\nabla v_\varepsilon|^2&\leq& C \int_{B(y,2{\varepsilon})}|\nabla u_\delta|^2 \; dx + C\|u_\delta\|_{L^\infty(B(0,R))}^2\int_{B(y,2{\varepsilon})} \frac{1}{\varepsilon^2}  \notag \\
&\leq &  C \int_{B(y,2{\varepsilon})}|\nabla u_\delta|^2 \; dx + C' \varepsilon  ,\notag 
\end{eqnarray}
which goes to $0$ when $\varepsilon\to 0$.
\end{remark}

%%%%%%%%%%%%%%%%%%%%%%%%%%%%%%%%%%%%%%%%%%%%%%%%%%%%

\section{Study in a cylinder}
\label{section4}
We now focus on Problem \ref{problem0}.

\begin{remark} \label{lem1} Let $u_0$ be the crack-front function in 3D. Then $\int_{\mathcal{C}}|\nabla u_0|^2 \; dx =2$.  Indeed, it suffice to compute $\int_{\mathcal{C}}|\nabla u_0|^2 \; dx$ in cylindrical coordinates  together with the fact that  $\int_{B(0,1)}|\nabla \varphi_0(x,y)|^2 dxdy =1$, where $\varphi_0$ is the 2D-cracktip. 
\end{remark}

\begin{proof}[Proof of Proposition \ref{prop1}]  The proof of Proposition \ref{prop1} follows again from a variant of the very standard competitor already used in the proof of Theorem \ref{mainth}. For any $\varepsilon>0$ small, we define
$$C_\varepsilon := B_{2D}(0,1-\varepsilon)\times [-1+\varepsilon, 1-\varepsilon] \subset \mathcal{C}$$
$$K_\varepsilon:= (K_0\setminus C_\varepsilon)  \cup \partial C_\varepsilon,$$
$$v= u_\delta {\bf 1}_{\mathcal{C}\setminus C_\varepsilon}.$$

Testing the local minimality of $(u_\delta,K_0)$ in $\mathcal{C}$ with this competitor $(v,K_\varepsilon)$ yields
$$\int_{\mathcal{C}}|\nabla u_\delta|^2 \; dx + \mathcal{H}^{2}(K_0\cap \mathcal{C})\leq \int_{\mathcal{C}\setminus C_\varepsilon}|\nabla u_\delta|^2 \; dx + \mathcal{H}^{2}(K_0\setminus C_\varepsilon)+\mathcal{H}^2(\partial C_\varepsilon).$$
Next, using Remark \ref{lem1} and using that 
$$\int_{\mathcal{C}}|\nabla u_\delta|^2 \; dx = \int_{\mathcal{C}}|\nabla u_0|^2 \; dx +\delta^2|\mathcal{C}|,$$
we get 
$$4+\delta^2|\mathcal{C}| \leq \int_{\mathcal{C}\setminus C_\varepsilon}|\nabla u_\delta|^2 \; dx + \mathcal{H}^{2}(K_0\setminus C_\varepsilon)+\mathcal{H}^2(\partial C_\varepsilon).$$
Letting $\varepsilon \to 0$ finally leads to
$$\delta^2\leq \frac{2\pi+4\pi-4}{2\pi}=3-\frac{2}{\pi},$$
and this finishes the proof of the Proposition.
\end{proof}

We now come to Proposition \ref{propsitionsdelta}. In the rest of this section we will denote by $J(u,K)$ for $J(u,K,\mathcal{C})$, i.e.
$$J(u,K):=\int_{\mathcal{C}\setminus K}|\nabla u|^2 dx +\mathcal{H}^2(K \cap \mathcal{C}).$$
 For any $\delta>0$ we denote by $(\bar u_\delta, \bar K_\delta)$ a minimizer for Problem  \eqref{problem0} and we recall that
$$ s(\delta):=J(u_\delta,K_\delta)-J(\bar u_\delta, \bar K_\delta)\geq 0.$$

In this section we prove in particular that  $s(\delta)=o(\delta^2)$ as $\delta\to 0$. We first observe the following easy bound
\begin{eqnarray}
 s(\delta)\leq 2\delta^2|\mathcal{C}|-2\delta \int_{\mathcal{C}\setminus \bar K_\delta} \frac{\partial \bar u_\delta}{\partial z} dx . \label{ineq00}
\end{eqnarray}

This follows by  simply noticing that $(\bar u_\delta -\delta z,\bar K_\delta)$ is a competitor for $(u_0,P_0)$, which is a minimizer, thus
$$J(u_0,P_0)\leq J(\bar u_\delta -\delta z,\bar K_\delta).$$
On the other hand
\begin{eqnarray}
\int_{\mathcal{C}\setminus \bar K_\delta}|\nabla (\bar u_\delta -\delta z)|^2 dx &=& \int_{\mathcal{C}\setminus \bar K_\delta}|\nabla \bar u_\delta|^2 dx-2\int_{\mathcal{C}\setminus \bar K_\delta}\langle  \nabla \bar u_\delta, \delta {\bf e}_3\rangle dx +\delta^2|\mathcal{C}|, \notag \\
&=& \int_{\mathcal{C}\setminus \bar K_\delta}|\nabla \bar u_\delta|^2 dx-2\delta \int_{\mathcal{C}\setminus \bar K_\delta} \frac{\partial \bar u_\delta}{\partial z} dx +\delta^2|\mathcal{C}|, \notag 
\end{eqnarray}
thus
\begin{eqnarray}
J(u_0,P_0)&\leq& J(\bar u_\delta ,\bar K_\delta)-2\delta \int_{\mathcal{C}\setminus \bar K_\delta} \frac{\partial \bar u_\delta}{\partial z} dx +\delta^2|\mathcal{C}| \notag \\
&= &  J( u_\delta ,P_0)-s(\delta)-2\delta \int_{\mathcal{C}\setminus \bar K_\delta} \frac{\partial \bar u_\delta}{\partial z} dx +\delta^2|\mathcal{C}| \notag \\
&= &  J( u_0 ,P_0)+\delta^2|\mathcal{C}|-s(\delta)-2\delta \int_{\mathcal{C}\setminus \bar K_\delta} \frac{\partial \bar u_\delta}{\partial z} dx +\delta^2|\mathcal{C}| ,\notag
\end{eqnarray}
so in conclusion we have proved \eqref{ineq00}. Notice that the right hand side of \eqref{ineq00} would be equal to zero if $\bar u_\delta=u_{\delta}$.

In what follows we establish a better inequality, as stated in the following proposition. 

\begin{proposition}\label{yetProp}
  
  \begin{eqnarray}
  s(\delta)\leq   \int_{\mathcal{C}} \left( \delta^2 - \left|\frac{\partial \bar u_\delta}{\partial z}\right| ^2  \right) dx+  \int_{\bar K_\delta} \left(|\sin(\theta(x))|-1 \right) \;d\mathcal{H}^2(x), \label{goodbound}
 \end{eqnarray} 
  where $\theta(x)$ is the angle between the normal vector $\nu$ to the tangent plane $T_x$ to $\bar K_\delta$ at point $x$, and the vector ${\bf e}_3$. 
\end{proposition}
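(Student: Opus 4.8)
The plan is to show that \eqref{goodbound} is \emph{equivalent} to the single scalar inequality
$$ 4 \;\le\; \int_{\mathcal{C}} \left( |\nabla \bar u_\delta|^2 - \left|\frac{\partial \bar u_\delta}{\partial z}\right|^2 \right) dx \;+\; \int_{\bar K_\delta} |\sin(\theta(x))| \, d\mathcal{H}^2(x), $$
and then to prove this last inequality by slicing the cylinder $\mathcal{C}$ by horizontal planes and invoking the minimality of the $2$D cracktip on each slice. For the reduction, I would use $\nabla u_\delta = \nabla u_0 + \delta\,{\bf e}_3$ with $\nabla u_0 \perp {\bf e}_3$, Remark~\ref{lem1} (giving $\int_{\mathcal{C}}|\nabla u_0|^2=2$), and the elementary fact $\mathcal{H}^2(P_0\cap\mathcal{C})=2$, to obtain $J(u_\delta,P_0)=4+\delta^2|\mathcal{C}|$; then, writing $J(\bar u_\delta,\bar K_\delta)=\int_{\mathcal{C}}|\nabla\bar u_\delta|^2 + \mathcal{H}^2(\bar K_\delta)$ and noting $\int_{\bar K_\delta}(|\sin\theta|-1)=\int_{\bar K_\delta}|\sin\theta|-\mathcal{H}^2(\bar K_\delta)$, a direct cancellation turns \eqref{goodbound} into the displayed scalar inequality above (the quantity $|\nabla\bar u_\delta|^2-|\partial_z\bar u_\delta|^2$ being exactly the squared norm of the horizontal gradient $(\partial_x\bar u_\delta,\partial_y\bar u_\delta)$).

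To prove the scalar inequality, fix $t\in[-1,1]$ and let $v_t:=\bar u_\delta(\cdot,\cdot,t)-\delta t$ on the disk $B:=B_{2D}(0,1)$. By the slicing theory for $SBV$ functions (see \cite{afp}), for a.e.\ $t$ one has $v_t\in SBV(B)$, its approximate gradient is the restriction to the slice of the horizontal gradient of $\bar u_\delta$, its jump set is contained up to an $\mathcal{H}^1$-null set in the horizontal slice $(\bar K_\delta)_t:=\{(x,y):(x,y,t)\in\bar K_\delta\}$, and its trace on $\partial B$ coincides with the trace of $u_\delta(\cdot,\cdot,t)-\delta t$ on $\partial B$, namely $\varphi_0$. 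Hence $(v_t,(\bar K_\delta)_t)$ is an admissible competitor for $(\varphi_0,K_0)$ in $B$ for its own boundary data; since $(\varphi_0,K_0)$ is a Mumford--Shah minimizer in $\R^2$ by \cite{Bonnetd}, and $\int_{B}|\nabla\varphi_0|^2=1$ (Remark~\ref{lem1}), $\mathcal{H}^1(K_0\cap B)=1$, this yields
$$ 2 \;\le\; \int_{B}|\nabla_{x,y}\bar u_\delta(\cdot,\cdot,t)|^2 \, dx\,dy \;+\; \mathcal{H}^1\big((\bar K_\delta)_t\big) \qquad \text{for a.e.\ } t\in[-1,1]. $$

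Integrating over $t\in[-1,1]$, Fubini handles the first term and gives $\int_{\mathcal{C}}\big(|\nabla\bar u_\delta|^2-|\partial_z\bar u_\delta|^2\big)\,dx$; for the second term the coarea formula for the $2$-rectifiable set $\bar K_\delta$ applied to the scalar function $x\mapsto z$ gives $\int_{-1}^{1}\mathcal{H}^1\big(\bar K_\delta\cap\{z=t\}\big)\,dt = \int_{\bar K_\delta}|\nabla^{\bar K_\delta} z|\,d\mathcal{H}^2 = \int_{\bar K_\delta}|\sin(\theta(x))|\,d\mathcal{H}^2(x)$, since $|\nabla^{\bar K_\delta}z|^2 = 1-(\nu\cdot{\bf e}_3)^2=\sin^2\theta$ where $\nu$ is the unit normal to $T_x$. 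Adding these produces exactly the scalar inequality, hence \eqref{goodbound}.

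The main technical point is the justification of the slicing step: that for a.e.\ $t$ the planar slice $\bar u_\delta(\cdot,\cdot,t)$ lies in $SBV(B)$, has jump set $\mathcal{H}^1$-essentially contained in $(\bar K_\delta)_t$, and has trace on $\partial B$ equal to the corresponding slice of the boundary datum $u_\delta$ — in particular the commutation of the boundary trace with horizontal slicing. This is standard ($SBV$ slicing, \cite{afp}), but should be invoked with care. Everything else — the scalar reduction, the coarea identity, and the appeal to Bonnet--David's minimality of the $2$D cracktip — is routine.
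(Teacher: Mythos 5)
Your proof is correct and follows essentially the same route as the paper: slice the cylinder by horizontal planes, invoke the minimality of the 2D cracktip on a.e.\ slice with its own boundary datum, integrate in $z$, and convert $\int_{-1}^{1}\mathcal{H}^1(\bar K_\delta\cap\{z=t\})\,dt$ into $\int_{\bar K_\delta}|\sin(\theta(x))|\,d\mathcal{H}^2$ via the coarea formula. The only differences are presentational: you first reduce \eqref{goodbound} to the equivalent scalar inequality $4\le\int_{\mathcal{C}}\big(|\nabla\bar u_\delta|^2-|\partial_z\bar u_\delta|^2\big)\,dx+\int_{\bar K_\delta}|\sin(\theta(x))|\,d\mathcal{H}^2$, and you explicitly flag the $SBV$ slicing and trace-commutation technicalities that the paper leaves implicit.
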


\begin{remark} Notice again that the right-hand side of \eqref{goodbound} would be zero for $\bar u_\delta = u_\delta$.
\end{remark}

 \begin{proof}[Proof of Proposition \ref{yetProp}] By minimality of the 2D-cracktip function, we deduce that for any $z_0\in [-1,1]$, fixed, the function $(x,y)\mapsto u_\delta(x,y,z_0)$ is minimizing the 2D-Mumford-Shah energy with its own boundary datum on $\partial B(0,1)$. 

Therefore, since $(\bar u_\delta, \bar K_\delta)$ is a competitor, we obtain the inequality
$$\int_{B_{2D}(0,1)}|\nabla_{x,y} u_\delta|^2 dxdy+ \mathcal{H}^1(P_0\cap \{z=z_0\}) \leq \int_{B_{2D}(0,1)} |\nabla_{x,y} \bar u_{\delta}(z_0) |^2 dxdy + \mathcal{H}^1(\bar K_\delta \cap \{z=z_0\})$$
for all $z_0$ fixed.

Now let us integrate over $z$ and compute 
\begin{eqnarray}
J(u_\delta, P_0) -\delta^2|\mathcal{C}| &\leq& \int_{-1}^1\int_{B_{2D}(0,1)} |\nabla_{x,y} \bar u_{\delta}(z) |^2 dxdydz + \int_{-1}^1\mathcal{H}^1(\bar K_\delta \cap \{z=t\})dt \notag \\
&= &  \int_{\mathcal{C}} |\nabla \bar u_{\delta}|^2 - \int_{\mathcal{C}} \left|\frac{\partial \bar u_\delta}{\partial z}\right| ^2  +  \int_{-1}^1\mathcal{H}^1(\bar K_\delta \cap \{z=t\})dt. \notag \\
&=& J(\bar u_\delta, K_\delta) - \int_{\mathcal{C}} \left|\frac{\partial \bar u_\delta}{\partial z}\right| ^2  +  \int_{-1}^1\mathcal{H}^1(\bar K_\delta \cap \{z=t\})dt - \mathcal{H}^2(K_\delta). \notag \\
&= &  J(u_\delta, P_0) - s(\delta) - \int_{\mathcal{C}} \left|\frac{\partial \bar u_\delta}{\partial z}\right| ^2  +  \int_{-1}^1\mathcal{H}^1(\bar K_\delta \cap \{z=t\})dt - \mathcal{H}^2(K_\delta).\notag
\end{eqnarray}
In other words,
$$s(\delta)\leq \delta^2|\mathcal{C}|- \int_{\mathcal{C}} \left|\frac{\partial \bar u_\delta}{\partial z}\right| ^2  +  \int_{-1}^1\mathcal{H}^1(\bar K_\delta \cap \{z=t\})dt - \mathcal{H}^2(\bar K_\delta).$$
 Next we apply the co-area formula (Th. 2.93 page 101 of \cite{afp}) to obtain
 
 $$\int_{-1}^1\mathcal{H}^1(\bar K_\delta \cap \{z=t\})dt=\int_{\bar K_\delta} c_kd^{\bar K_\delta}f(x) d\mathcal{H}^2(x)$$
 where $c_kd^{\bar K_\delta}f(x) $ is the co-area factor associated to the Lipschitz function 
 $$f:(x,y,z)\to z.$$
 To compute this factor we need to identify $d^{\bar K_\delta}f$.

 For this purpose we let $x\in \bar K_\delta$ be given and $(b_1,b_2)$ an orthonormal  basis of the tangent plane $T_x$ to $\bar K_\delta$ at point $x$. In this basis the linear form $d f$ is represented by the vector $(\langle b_1, {\bf e}_3\rangle,\langle b_2, {\bf e}_3\rangle)$ thus
 $$c_kd^{K_\delta}f(x)=\sqrt{\langle b_1, {\bf e}_3\rangle^2+\langle b_2, {\bf e}_3\rangle^2}=\|P_{T_x}({\bf e}_3)\|,$$
 where $P_{T_x}$ is the orthogonal projection of the vector ${\bf e}_3$ onto the (vectorial) tangent plane $T_x$ at point $x$. In other words,
$$c_kd^{K_\delta}f(x)=\|P_{T_x}({\bf e}_3)\|=|\sin(\theta(x))|$$
  where $\theta(x)$ is the angle between the normal vector $\nu$ to the tangent plane $T_x$ to $\bar K_\delta$ at point $x$, and the vector ${\bf e}_3$. This leads to the   inequality
  
  \begin{eqnarray}
s(\delta)\leq   \int_{\mathcal{C}} \left( \delta^2 - \left|\frac{\partial \bar u_\delta}{\partial z}\right| ^2  \right) dx+  \int_{\bar K_\delta} \left(|\sin(\theta(x))|-1 \right) \;d\mathcal{H}^2(x), \label{inequality0}
 \end{eqnarray} 
 as claimed in the proposition.
 \end{proof}

 Notice that $|\sin(\theta(x))|-1\leq 0$ which means that the above inequality looks stronger than the other one with same flavour just found before (inequality \eqref{ineq00}). We can now give a proof of proposition \ref{propsitionsdelta}.

\begin{proof}[Proof of Proposition \ref{propsitionsdelta}] Since $|\sin(\theta(x))|-1\leq 0$ we deduce  from the above proposition that 

  \begin{eqnarray}
s(\delta)\leq   \int_{\mathcal{C}} \left( \delta^2 - \left|\frac{\partial \bar u_\delta}{\partial z}\right| ^2  \right) dx .\label{inequality01}
 \end{eqnarray} 

On the other hand, for all vertical rays $R_{x,y}$ which do not touch $\bar K_\delta$, i.e. for the $(x,y) \in B_{2D}(0,1)$ such that  the whole segment $(x,y)\times [-1,1] \subset \mathcal{C}\setminus K_\delta$,  we  have
 $$\int_{R_{x,y}}\delta^2- \left|\frac{\partial \bar u_\delta}{\partial z}\right|^2 \leq 0,$$
 because the linear function $z\mapsto \delta z$ is the (1D)-Dirichlet minimizer on this Ray.   
 
 Let us call $Good$ the $(x,y)$ in this situation, and $Bad$ the other ones. Then 
  $$\int_{(x,y) \; Good} \int_{R_{x,y}} \delta^2-\left|\frac{\partial \bar u_\delta}{\partial z}\right|^2 \leq 0$$
  and it follows that the only part which is possibly positive in the right hand side of \eqref{inequality01} is  
 
 $$\int_{(x,y) \; Bad} \int_{R_{x,y}} \delta^2-\left|\frac{\partial \bar u_\delta}{\partial z}\right|^2,$$
 so that 
   \begin{eqnarray}
s(\delta)\leq  \int_{(x,y) \; Bad} \int_{R_{x,y}} \delta^2-\left|\frac{\partial \bar u_\delta}{\partial z}\right|^2 \leq \int_{(x,y) \; Bad} \int_{R_{x,y}} \delta^2.\label{inequality02}
 \end{eqnarray} 
 
Now we invoque the continuity behavior of $(u_\delta,K_\delta)$ as $\delta\to 0$, coming from the theory of Mumford-Shah minimizers (see Proposition 8 Section D.37 page 229 in \cite{d}), which says in particular that    $\bar K_\delta$ converges to $P_0$ as $\delta\to 0$, in Hausdorff distance. Thus there exists some $\varepsilon(\delta)$ going to $0$ as $\delta\to 0$ such that the volume of all bad rays is less than $\varepsilon(\delta)$. This implies
 $$s(\delta)\leq o(\delta^2),$$
 as desired.
 \end{proof}

%%%%%%%%%%%%%%%%%%%%%%%%

\section{Numerical results}
\label{lastsection}

We computed some numerical results in a cylinder using the Ambrosio-Tortorelli \cite{at} approximation functional  implemented via the free software \emph{Freefem++}. 
 
In Figure 1 we have represented some isovalues of the field $\varphi$ which represents the phase field approximation of the singular set of a minimizer. We recover the half-plane, which is the unique solution for the   datum $u_0$ on the boundary.

\begin{center}
\includegraphics[width=10cm]{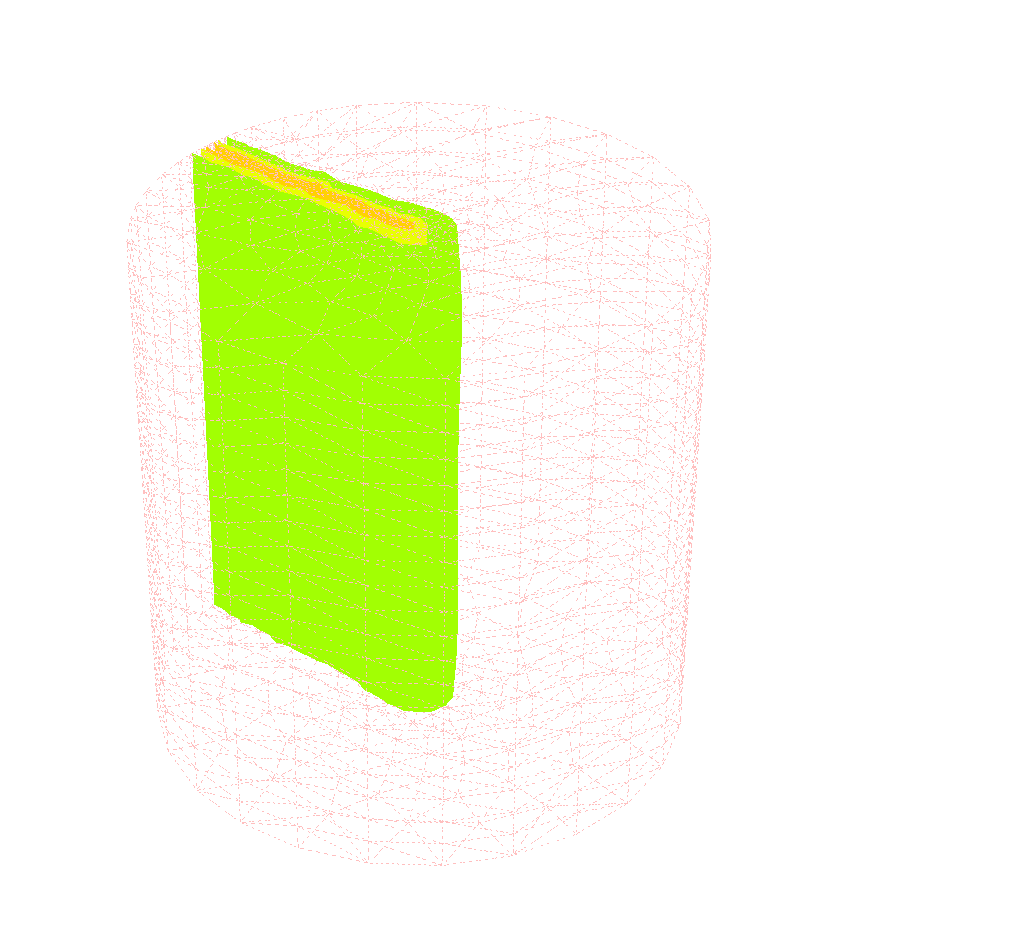}\\
Figure 1: The singular set of the solution with boundary datum $u_0$ on $\partial \mathcal{C}$.
\end{center}

\vspace{1cm}
In Figure 2 we have represented some isovalues of the field $\varphi$  for the datum $u_\delta$ on the boundary of the cylinder. Here $\delta=1/2$. The obtained profile of the solution looks like a twisted half-plane, which indicates that $u_\delta$ may not be a minimizer for this boundary datum. For the recall, the threshold of Proposition \ref{prop1} was $\delta >\sqrt{3-\frac{2}{\pi}}\simeq 1,537$ for which we know for sure that  $u_\delta$ is never minimizing. Here we have tested   $\delta =1/2 < \sqrt{3-\frac{2}{\pi}}$.

\begin{center}
\includegraphics[width=8cm]{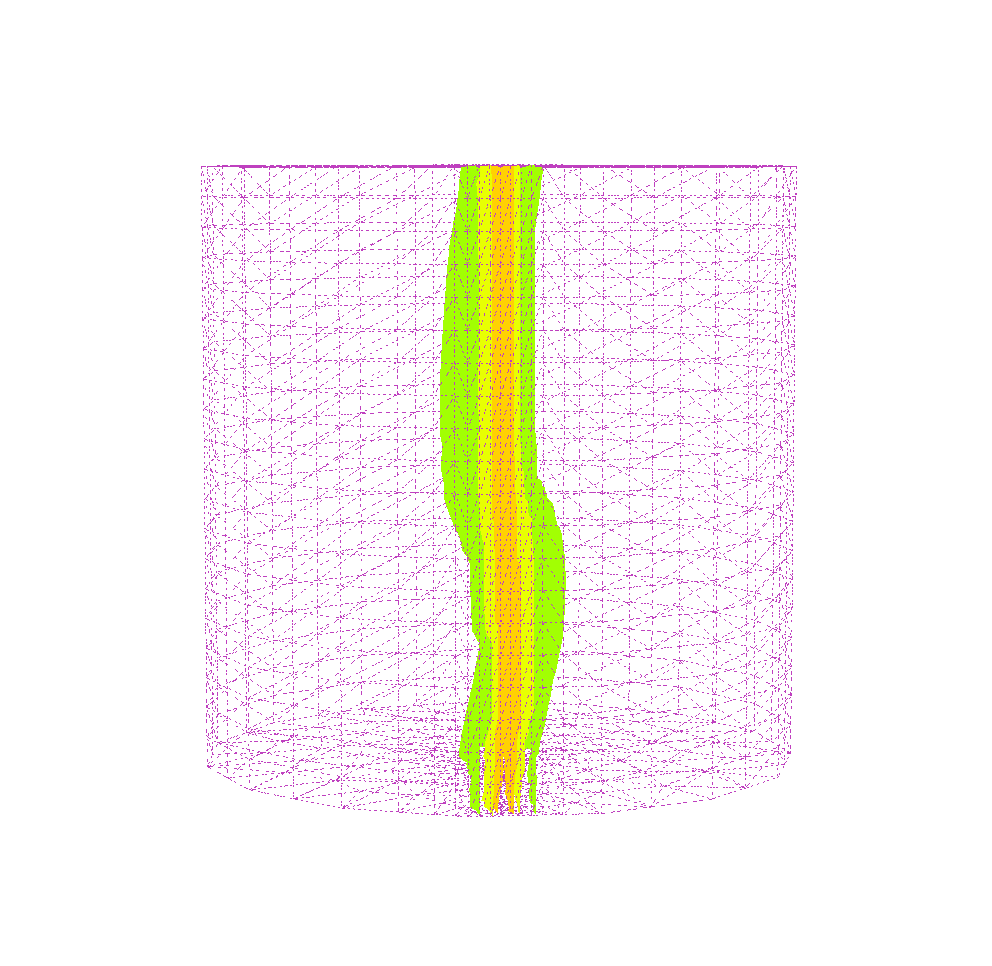}\includegraphics[width=7cm]{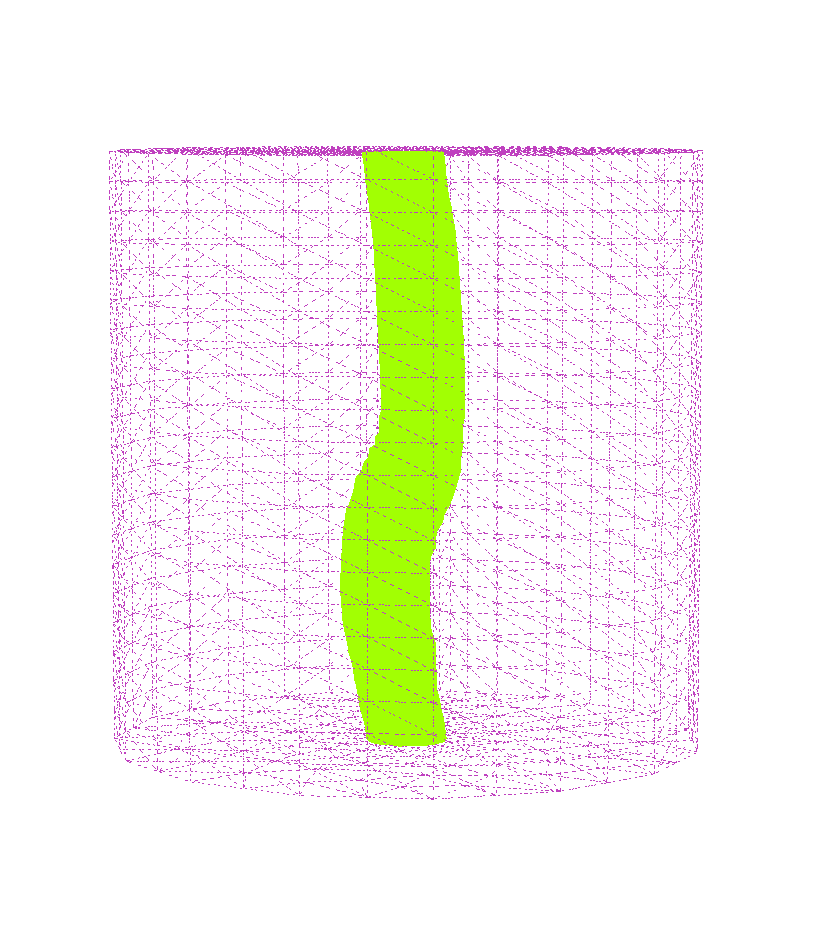}\\

\includegraphics[width=8cm]{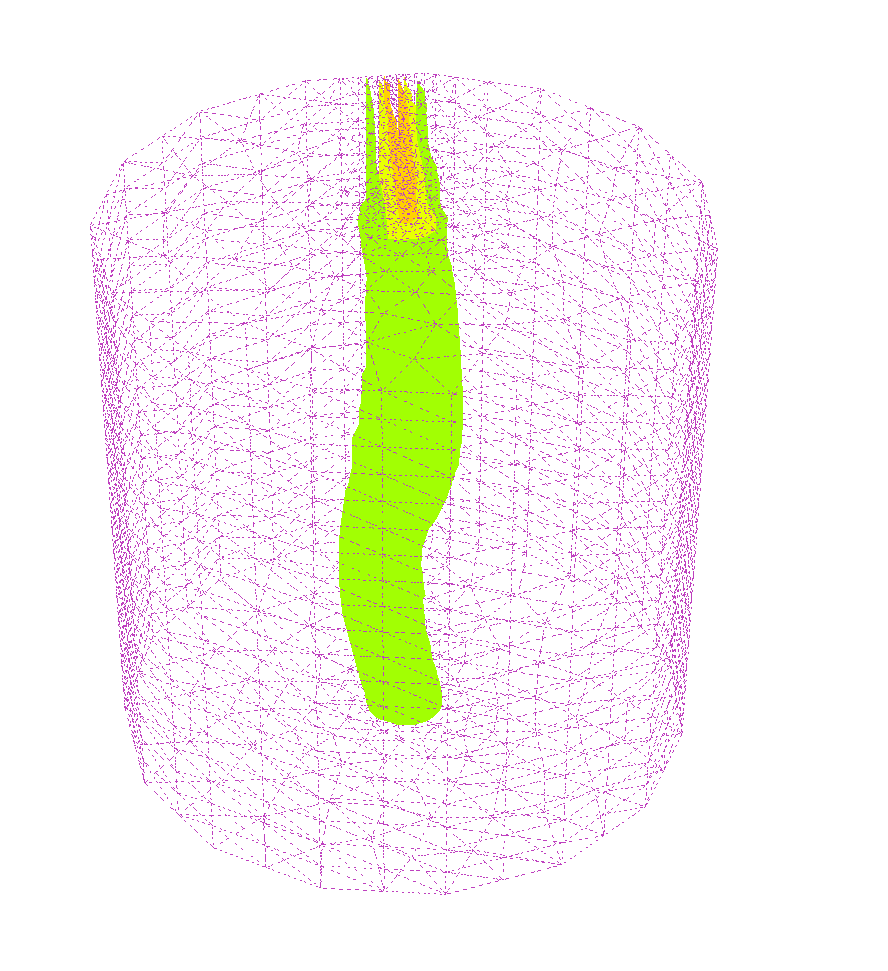}\includegraphics[width=8cm]{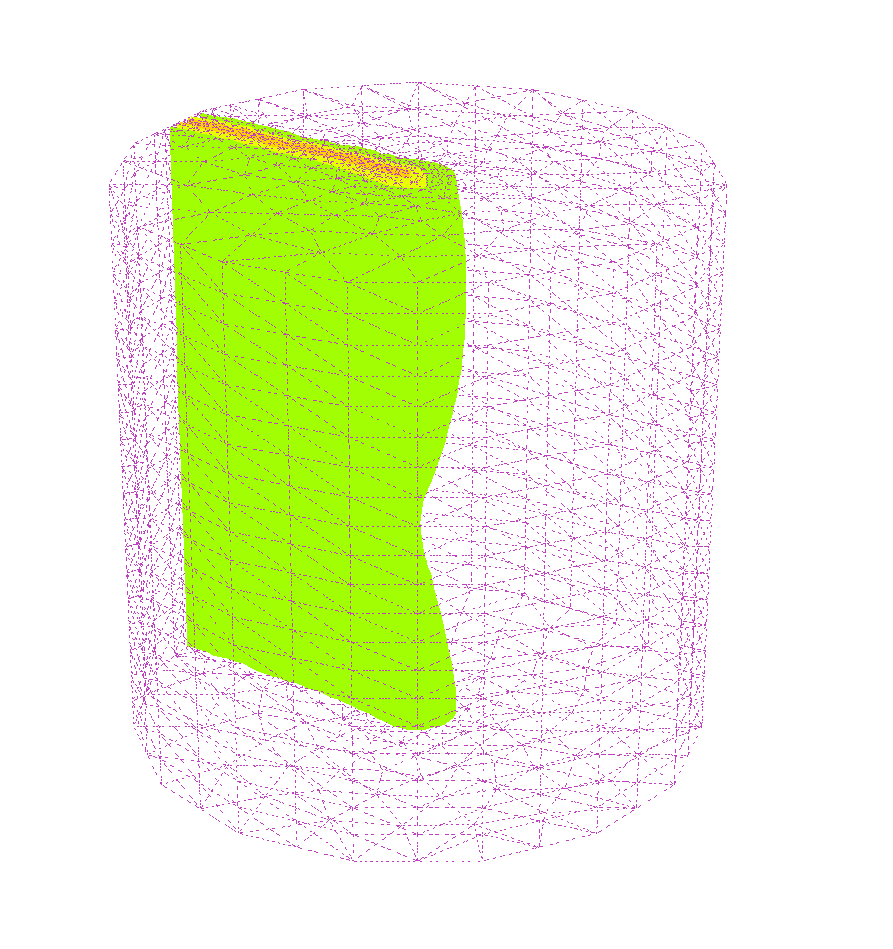}\\
\end{center}

\begin{center}
Figure 2: The singular set of the solution with boundary datum $u_\delta$ on $\partial \mathcal{C}$ with $\delta=1/2$.
\end{center}

\section{Acknowledgements}

We would like to thank Matthieu Bonnivard for giving to us the \emph{Freefem} code implementing the Ambrosio-Tortorelli functional in 3D which was  used in Section \ref{lastsection}. 

This work has been done while the first author was visiting the second author in Ningbo, PR China. Both authors  would like to express their gratitude to the National Science Foundation of China (research grant nr. 11650110437).

The first author has been supported by the PGMO project COCA.

%%%%%%%%%%%%%%%%%%%%%%%%%%%%%%%%%%%%%%%%%%%

\bibliographystyle{alpha}
\bibliography{biblio}

\vspace{1cm}

Antoine.Lemenant@ljll.univ-paris-diderot.fr  - University Paris 7 - LJLL - FRANCE
 \vspace{0.5cm}

Hayk.Mikayelyan@nottingham.edu.cn  - University of Nottingham Ningbo - CHINA

\end{document}